\newtheorem{Theorem}{Theorem}
\newtheorem{Proposition}[Theorem]{Proposition}
\newtheorem{Lemma}[Theorem]{Lemma}
\theoremstyle{remark}
\newtheorem*{Remark}{Remark}
\newtheorem*{rems}{Remarks} 
\newenvironment{Remarks}{\begin{rems}\normalfont}{\end{rems}}
\numberwithin{equation}{section}
\newcommand{\pqrfac}[3]{{\left({#1};#3\right)_{#2}}}
\newcommand{\elliptictheta}[1]{\theta\!\left({#1} ; p\right) }
\newcommand{\ellipticptheta}[2]{\theta\!\left({#1} ; #2\right) }
\newcommand{\ellipticqrfac}[2]{{\left({#1}; q, p\right)_{#2}}} 
\newcommand{\ellipticpqrfac}[4]{{\left({#1}; #3, #4\right)_{#2}}} 
\newcommand{\D}{{ \mathbf D}}
\newcommand{\B}{{ \mathbf B}}
\newcommand{\F}{{\mathbf F}}
\newcommand{\G}{{\mathbf G}}
\newcommand{\bH}{{\mathbf H}}
\newcommand{\M}{{\mathbf M}}
\newcommand{\bbeta}{{\boldsymbol \beta}}
\newcommand{\balpha}{{\boldsymbol \alpha}}
\newcommand{\qrfac}[2]{{\big({#1}; q\big)_{#2}}} 
\author[G.~Bhatnagar]{Gaurav Bhatnagar
}
\address{Ashoka University, Sonipat, Haryana 131029, India}
\email{bhatnagarg@gmail.com}
\author[A.~Kumari]{Archna Kumari}
\address{Department of Mathematics, Indian Institute of Technology, Delhi 110067, India.}
\email{arcyadav856@gmail.com}
\title{Expansion formulas for elliptic hypergeometric series}
\subjclass{Primary 33D15; Secondary 33D65, 33E20}
\keywords{Elliptic hypergeometric series, basic hypergeometric series, elliptic well-poised Bailey transform and lemma}
\begin{document}

\begin{abstract}
We provide an alternate approach to obtaining expansion formulas on the lines of the well-poised Bailey lemma. We recover results due to Spiridonov and Warnaar and one new formula of this type. 
These formulas contain an arbitrary sequence as an argument, and are thus flexible in the number of parameters they contain. As a result, we are able to derive eight new transformation formulas for elliptic hypergeometric series. These transformation formulas appear to be new even in the basic hypergeometric case, when $p=0$.
\end{abstract}

\maketitle

\section{Introduction}

There are a large number of summation and transformation formulas for hypergeometric and basic hypergeometric series, that is, series of the form 
$\sum t_k$, where the ratio $t_{k+1}/t_k$ of successive terms is a rational function of the index $k$, or, a rational function of $q^k$. There are relatively fewer results for elliptic hypergeometric series, where the ratio of successive terms is an elliptic function, the index $k$ now regarded as a complex variable.  
The objective of this paper is to provide a promising general technique, which is quite productive in finding new transformation formulas, and use it to find eight such formulas for elliptic hypergeometric series. 

Among the first results for elliptic hypergeometric series are two fundamental formulas given by Frenkel and Turaev~\cite{FT1997}, one of 
which sums a $_{10}V_9$ sum, and the other, a transformation formula for $_{12}V_{11}$ series.  Subsequently, Spiridonov~\cite{VPS2002} and 
Warnaar~\cite{SOW2002, SOW2003} popularized the area, obtained many results, and proposed several conjectures.   Further summation and 
transformation formulas appear in~\cite{BKS2024, CJ2008, CJ2013, GS2005,  LSW2009, LRW2020, RS2005, RS2020, SY2016c, SOW2005,
ZD2016}. Gasper and Rahman~\cite[Chapter 11]{GR90} and, more recently, Rosengren~\cite{HR2016-lectures} have given accessible 
introductions. 


As regards general techniques in the study of elliptic hypergeometric series, Warnaar~\cite{SOW2002} used matrix inversion to obtain many transformation formulas; Spiridonov~\cite{VPS2002}  established the well-poised  (or simply, the WP) Bailey lemma, and Warnaar~\cite{SOW2003} gave several analogous results. In addition, Chu and Jia~\cite{CJ2008, CJ2013} gave a general approach which leads to several identities. We offer a mild modification of the WP Bailey lemma---a slight change of perspective---that leads to several new transformation formulas.

Like the WP Bailey lemma, our approach requires a pair of inverse, infinite, lower-triangular, matrices. 
Thus, we require a pair of matrices
$\F=(F_{km})$ and $\G=(G_{km})$, with rows and columns indexed by non-negative integers, such that
$F_{km} =0 = G_{km}$  for  $m > k$,
and 
$$\sum_{j=k}^m F_{kj}G_{jm} =\delta_{km} = \sum_{j=k}^m G_{kj}F_{jm}.$$
Here $\delta_{km}$ is the delta function, which is $1$ when $k=m$, and $0$, otherwise. 

A key role is played by the entries of a matrix $\M$ given by:
\begin{equation}\label{M-terms}
M_{km}(a, b,  q, p) =  
\frac{ \ellipticqrfac{ aq^k, q^{-k}}{m}}{\ellipticqrfac{bq^{1-k}/a, bq^{1+k}}{m}},
\end{equation}
where the $q,p$-shifted factorial $\ellipticqrfac{a}{m}$ and other notations used here are defined shortly. 

If a balanced, well-poised sum, which terminates naturally after $k+1$ terms, has an additional parameter $a$, then it must contain this factor. Here the index of summation is $m$. 
This is  due to a classical theorem on elliptic functions (see Rosengren~\cite[Theorem~4.12, p.~221]{HR2016-lectures}), which forces 
the terms of an elliptic hypergeometric series to satisfy a balancing condition.
The balancing condition ensures that there is a numerator parameter of the form $aq^k$. Further, the well-poised condition implies a matching parameter in the denominator, of the form $bq^{1+k}$. The well-poised condition is satisfied because: 
$$aq^k \cdot bq^{1-k}/a = q^{-k}\cdot bq^{1+k} = bq.$$
Thus the factors in \eqref{M-terms} are present in the summand of any terminating, well-poised summation theorem for elliptic hypergeometric series. 

An essential ingredient of our technique is the following finite version of a lemma from Bhatnagar and Rai~\cite[Lemma~5.1]{BR2022}.
\begin{Lemma}\label{lemma:qLidea} Let $\F=(F_{km})$, $\G=(G_{km})$ and $\bH =(H_{km})$ be infinite, lower-triangular matrices, with rows and columns indexed by non-negative integers; let $\F$ and $\G$ be inverses of each other. Let ${\balpha}=(\alpha_j)$ be a sequence. Then
\begin{equation}\label{lemma-idea}
\sum_{j=0}^n H_{nj}  \alpha_j
 = 
 \sum_{k=0}^n F_{nk}  \sum_{j=0}^k \alpha_j \sum_{m=0}^{k-j} G_{k,j+m} H_{j+m,j}.
\end{equation}
\end{Lemma}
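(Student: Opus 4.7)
The plan is to start from the right-hand side of \eqref{lemma-idea} and reduce it to the left-hand side, using nothing more than the inversion identity $\sum_{k} F_{nk}G_{kl} = \delta_{nl}$ together with a careful interchange of the order of summation. The structure of the RHS already suggests this: the inner double sum in $m$ and $j$ should contract the moment the matrix product $\F \G$ is isolated, which happens once the summation order is chosen so that $F_{nk}$ and $G_{kl}$ face each other.

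First I would perform the index substitution $l = j + m$ in the innermost sum, rewriting
$$\sum_{m=0}^{k-j} G_{k,j+m} H_{j+m,j} = \sum_{l=j}^{k} G_{k,l} H_{l,j},$$
so that the RHS of \eqref{lemma-idea} becomes a triple sum over indices satisfying $0 \le j \le l \le k \le n$. Next I would interchange the order of summation so that $l$ is the outermost index, $k$ runs next from $l$ to $n$, and $j$ runs from $0$ to $l$, which is a clean relabelling of the same triangular region. The factor $\alpha_j$ does not depend on $k$, and the factor $F_{nk}G_{kl}$ does not depend on $j$, so the resulting expression factors as
$$\sum_{l=0}^{n} \Bigl(\sum_{k=l}^{n} F_{nk} G_{kl}\Bigr) \Bigl(\sum_{j=0}^{l} \alpha_j H_{l,j}\Bigr).$$
Because $F$ and $G$ are lower triangular, the inner sum in $k$ is the full matrix product $\sum_{k} F_{nk}G_{kl}$, which by hypothesis equals $\delta_{nl}$. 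The sum over $l$ then collapses to $l = n$, yielding $\sum_{j=0}^n \alpha_j H_{n,j}$, which is the LHS.

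There is no real obstacle here; the only thing requiring attention is the bookkeeping in the interchange, and in verifying that the triangularity of $\F$, $\G$, $\bH$ lets one freely extend or restrict ranges so that the inversion identity can be invoked. It is worth noting that $\balpha$ and $\bH$ play purely passive roles in the argument, so the lemma is really a formal identity about the inverse pair $(\F,\G)$ dressed up with an arbitrary sequence and an arbitrary triangular matrix; this flexibility is what will later allow $\bH$ to be specialized, via $\M$, to produce concrete transformation formulas.
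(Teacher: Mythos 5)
Your proof is correct and is exactly the paper's argument: the paper proves the lemma by interchanging the summations on the right-hand side twice (equivalently, via $\bH\balpha = \F(\G\bH)\balpha$), which is precisely your substitution $l=j+m$ followed by reordering to isolate $\sum_k F_{nk}G_{kl}=\delta_{nl}$. No gaps; the bookkeeping over the region $0\le j\le l\le k\le n$ is handled correctly.
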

The lemma follows by interchanging the summations on the right-hand side (twice). Alternatively, it follows from the matrix equalities
$$\bH\balpha = (\F\G)\bH\balpha = \F (\G\bH)\balpha.$$

Lemma~\ref{lemma:qLidea} is motivated by work on $q$-Lagrange inversion and its connection with matrix inversion; this is work of Gessel and Stanton~\cite{GS1983}, Andrews, Agarwal and Bressoud~\cite{AAB1988}, and others \cite{BR2022, DB1988, WM2017, SOW2003}. 

The key idea of this paper is to choose $\F$ and $\bH$ so that 
the inner sum on the right-hand side is summable by a known summation theorem. We call the resulting formula an expansion formula, motivated by Gasper and Rahman's description of \cite[Equation~(2.2.2)]{GR90}. This still leaves an arbitrary sequence $(\alpha_j)$ as a parameter. On further specialization, and application of a (possibly different) summation theorem, it yields a transformation formula.
What makes the application of this approach particularly simple, is that the entries of $\M$ appear---in some form---in all of $\F$, $\G$, 
$\bH$ and $\G\bH$.

Lemma~\ref{lemma:qLidea} is related to the WP Bailey lemma. The WP Bailey lemma uses a matrix (see \eqref{Bressoud}, below) denoted by
$\B=(B_{km})$. This matrix was given by Bressoud~\cite{DB1983} in the $q$-hypergeometric case. 
Let $\balpha$ and $\bbeta$ be two sequences connected by 
$$\bbeta = \B \balpha.$$
The pair $(\balpha, \bbeta)$ that satisfies this relationship is called a well-poised Bailey pair.
The WP Bailey lemma provides another well-poised Bailey pair $(\balpha^\prime, \bbeta^\prime)$. 
The WP Bailey chain is obtained by iteration; the lattice is obtained by changing one of the parameters; the Bailey tree (and various branches) appear by using analogous results. See Andrews and Berkovich~\cite{AB2002} and Andrews~\cite{Andrews2001}. 

Now, in Lemma~\ref{lemma:qLidea}, we choose 
the matrix $\F$ to be Warnaar's~\cite{SOW2002} elliptic extension of Bressoud's  matrix.  When we further choose $\bH$ so that the inner sum can be summed using Frenkel and Turaev's \cite{FT1997}  elliptic summation formula, for a terminating, balanced, and very-well-poised $_{10}V_9$, we obtain an expansion formula. A referee pointed out that this formula is equivalent to the elliptic WP Bailey lemma due to Spiridonov~\cite[Theorem~4.3]{VPS2002}. The left-hand side of \eqref{lemma-idea} corresponds to $\bbeta^\prime$ $(=\B \balpha^\prime)$ written out explicitly in terms of $\balpha$;  the right-hand side matches too, when written in terms of $\balpha$.  

Similarly, given a summation theorem, we may choose $\bH$ appropriately to ensure that the inner sum is summable by this summation, obtaining a result analogous to the WP Bailey lemma. This is what we do here---with four known summation theorems---keeping $\F$ to be the aforementioned matrix of Warnaar. In addition to Frenkel and Turaev's sum, we use two summation theorems due to 
Warnaar~\cite{SOW2005}, and one from Lee, Rains and Warnaar~\cite{LRW2020}. These four summations are used to recover WP Bailey lemma type results, including Spiridonov's WP Bailey lemma, as well as two of Warnaar's results~\cite{SOW2003}. In addition, we obtain one new Bailey lemma type result. These are the expansion formulas in 
\S\ref{sec:expansions}  of this paper. 

Next, we apply these four summation theorems, and two more due to Warnaar~\cite{SOW2005}, to these four expansion formulas. Of the 24 transformation formulas thus obtained, eight are new; these are listed in \S\ref{sec:transformations}. We conclude in  \S\ref{sec:special} with some remarks on basic hypergeometric series obtained when the nome $p=0$. In particular, we highlight a transformation formula that appears to be a close cousin of a formula of Bailey~\cite[Equation (2.8.3)]{GR90}, and obtain two summation theorems as special cases.


We now 
list some notation used in this paper. For further background information, we recommend
Gasper and Rahman~\cite[Chapter 11]{GR90} and Rosengren~\cite{HR2016-lectures}.
\subsection*{Notation} \ 
\begin{enumerate}
\item We define the {\em $q$-shifted factorials}, for $k$ a non-negative integer, as
\begin{gather*}
\qrfac{a}{k} :=\prod\limits_{j=0}^{k-1} \big(1-aq^j\big),
\end{gather*}
and for $|q|<1$,
\begin{gather*}
\qrfac{a}{\infty} := \prod\limits_{j=0}^{\infty} \big(1-aq^j\big).
\end{gather*}
The parameter $q$ is called the {\em base}. With this definition, we have the {\em modified Jacobi theta function}, defined as
\begin{gather*} \elliptictheta{a} := \pqrfac{a}{\infty}{p} \pqrfac{p/a}{\infty}{p},\end{gather*}
where $a\neq 0$ is a complex number, and $|p|<1$. We define the {\em $q, p$-shifted factorials} (or {\em theta shifted factorials}), for $k$ a non-negative integer, as
\begin{gather*}
\ellipticqrfac{a}{k} := \prod\limits_{j=0}^{k-1} \elliptictheta{aq^j}.
\end{gather*}
The parameter $p$ is called the {\em nome}. When the nome $p=0$, the modified theta function $\elliptictheta{a}$ reduces to $(1-a)$; and $\ellipticqrfac{a}{k}$ reduces to $ \pqrfac{a}{k}{q}$.
\item We use the shorthand notations:
\begin{gather*}
\elliptictheta{a_1, a_2, \dots, a_r} := \elliptictheta{a_1} \elliptictheta{a_2}\cdots \elliptictheta{a_r};\\
\ellipticqrfac{a_1, a_2,\dots, a_r}{k} := \ellipticqrfac{a_1}{k} \ellipticqrfac{a_2}{k}\cdots \ellipticqrfac{a_r}{k};\\
\qrfac{a_1, a_2,\dots, a_r}{k} := \qrfac{a_1}{k} \qrfac{a_2}{k}\cdots \qrfac{a_r}{k}.
\end{gather*}

\item Note that for non-negative integers $n$ and $k$:
\begin{equation}\label{terminating}
\qrfac{q^{-n}}{k} = 0 = \ellipticqrfac{q^{-n}}{k} \text{ if } k > n.
\end{equation}

\item The terms {\em balanced} and  {\em very-well-poised} are used to describe the relationship of the parameters for elliptic hypergeometric series. In addition, there is a notation $${}_{r+1}V_r(a_1;a_6,\dots,a_{r+1};q,p)$$ which is used to describe such series with $r+1$ numerator parameters. We refer to \cite{GR90, HR2016-lectures} for their definitions. In what follows, we use these descriptive terms, as well as the corresponding terminology for basic hypergeometric series ($_r\phi_s$  series). 
\end{enumerate}
We use elementary identities from \cite[Ch.\ 11]{GR90} without comment.

\section{Expansion formulas}\label{sec:expansions} 

In this section, we derive four expansion formulas by using Lemma~\ref{lemma:qLidea}. 
The pair of inverse matrices we use for this derivation are an equivalent form of inverse matrices given by Warnaar~\cite{SOW2002}. 
\begin{Proposition}[Warnaar]\label{warnaar-matrix}
Let $\F=\big(F_{km}(a,b)\big)$ and $\G=\big(G_{km}(a,b))$ be defined as
\begin{subequations}
\begin{align}
F_{km}(a,b) &=  
\frac{\elliptictheta{aq^{2m}} \ellipticqrfac{bq^k, q^{-k}}{m}\ellipticqrfac{b/a}{k}}{\ellipticqrfac{aq^{1+k}, aq^{1-k}/b}{m}}
\Big(\!-\frac{a}{b}\Big)^m q^{\binom {m+1}2}; \label{matrix-F}\\
G_{km}(a,b) & = \frac{\elliptictheta{bq^{2m}}\ellipticqrfac{aq^{m+1}}{k-1}\ellipticqrfac{a/b}{k}
\ellipticqrfac{q^{-k}}{m}}{\ellipticqrfac{bq^{1-k}/a, q}{m} \ellipticqrfac{q}{k}\ellipticqrfac{bq^m}{k+1}} 
\Big(\!-\frac{b}{a}\Big)^k q^{m-\binom k2 }.\label{matrix-G}
\end{align}
\end{subequations}
Then $\F$ and $\G$ are inverses of each other.
\end{Proposition}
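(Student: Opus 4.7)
The proof amounts to checking that $\sum_{j} F_{kj}(a,b)\,G_{jm}(a,b) = \delta_{km}$; for inverse pairs of lower-triangular infinite matrices the conjugate relation $\sum_j G_{kj}F_{jm}=\delta_{km}$ is automatic. The presence of $\ellipticqrfac{q^{-k}}{j}$ in $F_{kj}$ forces $j \le k$, while $\ellipticqrfac{q^{-j}}{m}$ in $G_{jm}$ forces $j \ge m$, so the sum is nontrivial only for $k\ge m$ and runs over $m \le j \le k$.

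For the diagonal case $k=m$, I would compute $F_{kk}(a,b)\,G_{kk}(a,b)$ directly. The $q$-power prefactors combine as $(-a/b)^k(-b/a)^k q^{\binom{k+1}{2}+k-\binom{k}{2}}=q^{2k}$, and the theta shifted factorials pair off after standard use of the reflection identity $\ellipticqrfac{aq^{-k}}{k}=\ellipticqrfac{q/a}{k}(-a/q)^k q^{-\binom{k}{2}}$, together with $\ellipticqrfac{bq^k}{k+1}=\elliptictheta{bq^{2k}}\ellipticqrfac{bq^k}{k}$ and $\elliptictheta{aq^{2k}}\ellipticqrfac{aq^{k+1}}{k-1}=\ellipticqrfac{aq^{k+1}}{k}$; the factors $\elliptictheta{aq^{2k}}$ and $\elliptictheta{bq^{2k}}$ cancel and the residual factorials telescope, giving $F_{kk}G_{kk}=1$.

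For $k>m$, the plan is to substitute the explicit formulas, factor out all $j$-independent quantities, and reindex by $i=j-m$. After collecting terms, the inner sum over $0\le i\le k-m$ takes the canonical shape of a terminating, balanced, very-well-poised ${}_{10}V_9$ whose base, special parameter, and five upper parameters are rational monomials in $a$, $b$, $q^k$, $q^m$, with one upper parameter equal to $q^{m-k}$ providing the termination. Applying the Frenkel--Turaev summation formula~\cite{FT1997} evaluates this inner sum in closed form, and by our choice of parameters the resulting product contains the vanishing factor $\ellipticqrfac{1}{k-m}=\elliptictheta{1}\,\ellipticqrfac{q}{k-m-1}=0$, so the whole sum is zero.

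The main obstacle is the bookkeeping with the $q^{\binom{\cdot}{2}}$ prefactors and reflected theta factorials needed to arrange the inner sum into the tight ${}_{10}V_9$ shape and to verify that the balancing condition for the Frenkel--Turaev sum is satisfied by the chosen parameters. A faster route, consistent with the attribution in the proposition, is to identify the pair $(\F,\G)$ of~\eqref{matrix-F}--\eqref{matrix-G} directly with the elliptic Bressoud-type matrix inversion of Warnaar~\cite{SOW2002} up to an explicit reparametrization, in which case the claim follows at once.
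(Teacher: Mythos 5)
Your proposal is correct, but your primary route is genuinely different from the paper's. The paper does not verify the orthogonality relation directly: it observes that $\F$ and $\G$ are obtained from the known Bressoud-type pair $\B$, $\B^{-1}$ of \eqref{Bressoud} (Warnaar's inversion in the form of Bhatnagar--Schlosser) by conjugating with explicit diagonal matrices, namely $\G=\D^L\B^{-1}\D^R$ and $\F=(\D^R)^{-1}\B(\D^L)^{-1}$ with $D^L_{kk}=(-1)^kq^{-\binom k2}/\elliptictheta{aq^{2k}}$ and $D^R_{mm}=\ellipticqrfac{b}{m}/\ellipticqrfac{q,aq}{m}$ --- exactly the ``faster route'' you mention in your last sentence, which is why the result is attributed to Warnaar. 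Your main argument, by contrast, is a self-contained verification, and it does go through: the diagonal product $F_{kk}G_{kk}=1$ checks out as you describe, and for $k>m$ the reindexed sum over $i=j-m$ collapses (after the cancellations coming from $\ellipticqrfac{q^{-m-i}}{m}$ against $\ellipticqrfac{q}{m+i}$ and $\ellipticqrfac{bq^{1-m-i}/a}{m}$ against $\ellipticqrfac{a/b}{m+i}$) to the very-well-poised series $\sum_{i=0}^{k-m}\frac{\elliptictheta{aq^{2m+2i}}}{\elliptictheta{aq^{2m}}}\frac{\ellipticqrfac{aq^{2m},\,a/b,\,bq^{k+m},\,q^{m-k}}{i}}{\ellipticqrfac{q,\,bq^{2m+1},\,aq^{1+m-k}/b,\,aq^{1+k+m}}{i}}\,q^{i}$ with special parameter $aq^{2m}$. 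One small correction to your description: this is an ${}_8V_7$, i.e.\ the degenerate case of \eqref{10V9} in which two of the five upper parameters multiply to $aq^{2m+1}$ so that their columns cancel (forcing $bc=aq^{2m}\cdot q^{k-m}$ in the notation of \eqref{10V9}); it is not a full five-parameter ${}_{10}V_9$. The Frenkel--Turaev product side then contains $\ellipticqrfac{aq/bc}{k-m}=\ellipticqrfac{q^{1-(k-m)}}{k-m}$, whose last factor is $\elliptictheta{1}=0$, which is the vanishing you predicted. What each approach buys: the paper's conjugation argument is a two-line reduction to a cited result, while your direct verification is longer but independent of \cite{BS2018a}/\cite{SOW2002} and makes visible why the Frenkel--Turaev sum underlies the inversion --- a point consonant with the paper's later use of \eqref{10V9} in deriving the first expansion formula.
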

It is easy to see  that the matrix inversion of $\F$ and $\G$ is equivalent to the matrix inversion of $\B=\big( B_{km}\big)$ and its inverse, with entries:
\begin{subequations}
\begin{align}
B_{km} &=  \frac{\ellipticqrfac{b}{k+m} \ellipticqrfac{b/a}{k-m}} { \ellipticqrfac{aq}{k+m}\ellipticqrfac{q}{k-m}}, \label{Bressoud}
\intertext{and}
B^{-1}_{k m} &=\frac{\elliptictheta{aq^{2k}}}{\elliptictheta{a}}\frac{\elliptictheta{bq^{2m}}}{\elliptictheta{b}}\frac{\ellipticqrfac{a}{m+k} \ellipticqrfac{a/b}{k-m}}
{ \ellipticqrfac{bq}{m+k}\ellipticqrfac{q}{k-m}}\left( \frac{b}{a}\right)^{k-m}.
\end{align}
\end{subequations}
This is the Warnaar's matrix inversion as written by Bhatnagar and Schlosser~\cite[Cor.~4.5]{BS2018a} (with $n=1$, $x_1=1$).

\begin{Remark} Note that the matrix $\M$ given in \eqref{M-terms} is essentially the same as the matrix $\B$. Observe that $M_{km}(b,a,q,p)$ can be obtained from $B_{km}$ by multiplying by suitable diagonal matrices.
\end{Remark}

In our derivations below, we find it convenient to take $h_j(m)$ and $K(m)$ such that
\begin{equation*}
H_{mj} = h_{j}(m)K(m).
\end{equation*}
This separates out factors of $H_{mj}$ that depend on both $m$ and $j$. We will always take $h_j(m)$ to be $M_{mj}(a,b,q,p)$ with some substitutions for $a, b, q, p$. 

\subsection*{First expansion formula}
For our first expansion formula, we use Proposition~\ref{warnaar-matrix} and
the  $_{10}V_9$ summation formula due to 
Frenkel and Turaev~\cite{FT1997}; see \cite[equation~(11.4.1)]{GR90}, which may be written as: 
\begin{gather}\label{10V9}
\sum_{j=0}^k
\frac
{\ellipticptheta{aq^{2j}}{p} \ellipticqrfac{a, b, c, d, a^2q^{k+1}/bcd, q^{-k}}{j}q^{j}}
{\ellipticptheta{a}{p} \ellipticqrfac{q, aq/b, aq/c, aq/d, bcdq^{-k}/c, aq^{k+1}}{j}}
=
\frac {\ellipticqrfac{aq,aq/bc,aq/bd,aq/cd}n} {\ellipticqrfac{aq/b,aq/c,aq/d,aq/bcd}n}.
\end{gather}

Our first expansion formula is equivalent to the well-poised Bailey lemma given by Spiridonov~\cite[Theorem~5.1]{VPS2002}. It is given by:
\begin{multline}\label{ell-2}
\frac
{\ellipticqrfac{aq, c/b, d,  bcq/ad}{n}}
{\ellipticqrfac{c/a, bq, cq/d, ad/b}{n}}
\sum_{j=0}^n
\frac{
\ellipticqrfac{ cq^n,q^{-n}, bq/d, ad/c}{j}}
{
\ellipticqrfac{
bq^{1-n}/c, bq^{1+n},  d, bcq/ad}{j}}
A_{j}\\
=\sum_{k=0}^n \frac{\elliptictheta{aq^{2k}}
\ellipticqrfac{a, q^{-n},cq^n, a/b, ad/c, bq/d}{k}}
{\elliptictheta{a}\ellipticqrfac{q, 
aq^{1+n},aq^{1-n}/c, bq, cq/d, ad/b}{k}}q^k
\\
\times 
\sum_{j=0}^k 
\frac{
\ellipticqrfac{aq^k, q^{-k}}{j}}
{
\ellipticqrfac{
bq^{1-k}/a, bq^{1+k}}{j}}
A_{j}.
\end{multline}

\begin{Remark}
We observe that the summand of the sum on the left-hand side of \eqref{ell-2} contains $M_{nj}(c, b,  q, p)$ (see \eqref{M-terms}), while the inner sum on the right-hand side contains
$M_{kj}(a, b,  q, p)$.
\end{Remark}

\begin{proof}[Proof of \eqref{ell-2}]
We take $F_{mj}(a,c)$ from \eqref{matrix-F} as the matrix $\F$ and the corresponding $ G_{mj}(a,c)$ in Lemma~\ref{lemma:qLidea}.
We take $H_{mj} = h_j(m) K(m)$, with
$h_{j}(m)= F_{mj}(b,c)$,  and,
\begin{equation*}
K(m)= \frac{\ellipticqrfac{d, bcq/ad, aq}{m}}{\ellipticqrfac{cq/d, ad/b, bq}{m}}
.
\end{equation*}
The inner sum on the right-hand side is summed using the
$$(a, b, c, d, n)\mapsto  (cq^{2j}, aq^{j+k}, c/b, dq^{j}, k-j) 
$$
case of \eqref{10V9}. 
After some elementary calculations, we adjust the final form of \eqref{ell-2} by taking
$$\alpha_{j}=(-1)^j\Big(\frac{c}{bq}\Big)^j  \frac{\elliptictheta{b}\ellipticqrfac{ad/c,bq/d}{j}}
{\elliptictheta{bq^{2j}}\ellipticqrfac{ d, bcq/ad}{j}}
q^{-\binom{j}{2}}A_{j}.$$
\end{proof}

\subsection*{Second expansion formula}
The second expansion formula is equivalent to Warnaar~\cite[Theorem 5.2]{SOW2003}:
\begin{multline}\label{expansion2}
\frac{\elliptictheta{-cq^{2n}}\ellipticqrfac{-c, aq}{n}  \ellipticpqrfac{c/aq}{n}{q^2}{p^2} }
{\elliptictheta{-c} \ellipticqrfac{-q, c/a }{n}\ellipticpqrfac{acq^3}{n}{q^2}{p^2}}q^n
\\
\times 
\sum_{j=0}^{n} 
\frac{  \ellipticpqrfac{c^2q^{2n}, q^{-2n} }{j}{q^2}{p^2} \ellipticqrfac{-aq}{2j}}
{ \ellipticpqrfac{aq^{3-2n}/c, acq^{3+2n} }{j}{q^2}{p^2} \ellipticqrfac{-cq}{2j} }
 \Big(\frac{a}{c}\Big)^j A_{j}\\
= \sum_{k=0}^{n} 
 \frac{ \ellipticptheta{a^2q^{4k}}{p^2}\ellipticqrfac{cq^{n}, q^{-n}}{k} \ellipticpqrfac{a^2, a/cq }{k}{q^2}{p^2}}
{ \ellipticptheta{a^2}{p^2}  \ellipticqrfac{ aq^{1-n}/c, aq^{1+n}}{k}
\ellipticpqrfac{q^2, acq^3 }{k}{q^2}{p^2}}q^{2k}
\sum_{j=0}^k \frac{\ellipticpqrfac{ a^2q^{2k}, q^{-2k} }{j}{q^2}{p^2} }
{\ellipticpqrfac{cq^{3-2k}/a, acq^{3+2k} }{j}{q^2}{p^2}  }   A_{j}.
\end{multline}
\begin{Remark} Notice the entries $M_{nj}(c^2, acq,  q^2, p^2)$ in the summand of the sum on the left-hand side of \eqref{expansion2}, and
$M_{kj}(a^2, acq,  q^2, p^2)$ on the right-hand side.
\end{Remark}

Our derivation requires Warnaar's summation \cite[Eq.~(1.3)]{SOW2005}, which we write as
\begin{multline}\label{SOW1.3}
\sum_{j=0}^k
\frac
{\ellipticptheta{a^2q^{4j}}{p^2} \ellipticpqrfac{a^2, b/q}{j}{q^2}{p^2}\ellipticqrfac{aq^k/b, q^{-k}}{j} }
{\ellipticptheta{a^2}{p^2} \ellipticpqrfac{q^2, a^2q^3/b}{j}{q^2}{p^2} \ellipticqrfac{bq^{1-k}, aq^{1+k}}{j} }
q^{2j} \\
= 
\frac
{\elliptictheta{-aq^{2k}/b} \ellipticpqrfac{1/bq}{k}{q^2}{p^2}\ellipticqrfac{-a/b, aq}{k} }
{\elliptictheta{-a/b} \ellipticpqrfac{a^2q^3/b}{k}{q^2}{p^2} \ellipticqrfac{-q, 1/b}{k} }
q^k.
\end{multline}

\begin{proof}[Proof of \eqref{expansion2}]
We take $F_{mj}(a,c)$ from \eqref{matrix-F} as the matrix $\F$ and the corresponding $ G_{mj}(a,c)$ in Lemma~\ref{lemma:qLidea}.
We take $H_{mj} = h_j(m) K(m)$, with
$h_{j}(m)= M_{mj}(c^2, acq,  q^2, p^2)$,  and,
\begin{equation*}
K(m)= 
\frac{\elliptictheta{-cq^{2m}} \ellipticqrfac{aq}{m}\ellipticqrfac{-c}{m}\ellipticpqrfac{c/aq }{m}{q^2}{p^2}}
{\ellipticqrfac{-q}{m}\ellipticpqrfac{acq^3}{m}{q^2}{p^2}}q^{m}.
\end{equation*}

We use a suitable value of $\alpha_j$ to adjust the final result to obtain \eqref{expansion2}. This proof is essentially how Warnaar derived his original result. 
\end{proof}

\subsection*{Third expansion formula}
For the third expansion formula, we use 
Warnaar's summation formula \cite[Eq.~(1.4)]{SOW2005}: 
\begin{multline}\label{SOW1.4}
\sum_{j=0}^k
\frac
{\ellipticptheta{abq^{4j}}{p^2} \ellipticpqrfac{ab, aq^2/b, a^2q^{2k}, q^{-2k}}{j}{q^2}{p^2}
\ellipticqrfac{b}{2j}}
{\ellipticptheta{ab}{p^2} \ellipticpqrfac{q^2,  b^2, bq^{2-2k}/a, abq^{2+2k}}{j}{q^2}{p^2} 
\ellipticqrfac{aq}{2j}}
\Big( \frac{bq}{a}\Big)^j\\
= 
\frac
{\elliptictheta{a} \ellipticpqrfac{abq^2}{k}{q^2}{p^2}\ellipticqrfac{-q, aq/b}{k} }
{\elliptictheta{aq^{2k}} \ellipticpqrfac{a/b}{k}{q^2}{p^2} \ellipticqrfac{a, -b}{k} }
q^{-k}.
\end{multline}

For the third expansion formula, we use the matrices $\F$ and $\G$ obtained by replacing $q$ by $q^2$ and $p$ by $p^2$  in Proposition~\ref{warnaar-matrix}. 
We use $H_{mj} = h_j(m) K(m)$, with
$h_{j}(m)= M_{mj}(c, c^2/aq^2,  q^2, p^2)$,  and
\begin{equation*}
K(m)=  \frac{\ellipticpqrfac{aq^2, aq^2/c}{m}{q^2}{p^2}\ellipticqrfac{c/\sqrt{a}}{2m}}{\ellipticpqrfac{c^2/a}{m}{q^2}{p^2} \ellipticqrfac{\sqrt{a}q}{2m}}\bigg(\frac{c}{aq}\bigg)^{m}.
\end{equation*}
Now summing the inner sum on the right-hand side by \eqref{SOW1.4}, and replacing $\alpha_j$ by a suitable multiple of $A_j$, we obtain the following expansion formula.
%
%
%
%
 Again, it is an equivalent form of a WP Bailey lemma type result, due to Warnaar~\cite[Theorem~3.2]{SOW2003}:
\begin{multline}\label{expansion3}
\frac{
\ellipticpqrfac{a^2q^2, a^2q^2/c}{n}{q^2}{p^2}
\ellipticpqrfac{-c/a}{2n}{q}{p} }
{
\ellipticpqrfac{c/a^2, c^2/a^2}{n}{q^2}{p^2}
\ellipticpqrfac{-aq}{2n}{q}{p}
} 
\bigg( \frac{c}{a^2q}\bigg)^n 
\sum_{j=0}^{n} 
\frac{ \ellipticpqrfac{ cq^{2n}, q^{-2n}}{j}{q^2}{p^2}}
{\ellipticpqrfac{ cq^{-2n}/a^2, c^2q^{2n}/a^2} {j}{q^2}{p^2}}
 A_{j}\\
= \sum_{k=0}^{n} 
\frac{
\ellipticptheta{aq^{2k}}{p}} 
{\ellipticptheta{a}{p} 
} 
\frac{
 \ellipticqrfac{a, a^2q/c}{k} 
 \ellipticpqrfac{cq^{2n}, q^{-2n} }{k}{q^2}{p^2}}
{ \ellipticqrfac{q, c/a}{k}  
\ellipticpqrfac{  a^2q^{2-2n}/c, a^2q^{2+2n}}{k}{q^2}{p^2}
}
q^{k}\\
 \times \sum_{j=0}^k \frac{\ellipticqrfac{ aq^{k}, q^{-k} }{j}}
{\ellipticpqrfac{cq^{-k}/a^2, cq^{k}/a}{j}{q}{p} 
} 
 A_{j}.
\end{multline}
\begin{Remark}
Notice the factor $M_{nj}(c, c^2/a^2,  q^2, p^2)$ on the left-hand side of \eqref{expansion3}, and $M_{kj}(a, c/a,  q, p)$ on the right-hand side.
\end{Remark}

\subsection*{Fourth expansion formula}
Our final expansion formula uses a summation theorem due to 
Lee, Rains and Warnaar~\cite{LRW2020}. In  \cite[Eq.\ (2.19a)]{LRW2020}, we take $a\mapsto a/p$, simplify, and then take $a\mapsto b$ and $b\mapsto a/b$, to rewrite their result as:
\begin{equation}\label{lrw-1}
\sum_{j=0}^k
\frac{\elliptictheta{bq^{2j}} \ellipticqrfac{b, q^{-k}, aq^{k},   aq/b}{j}\ellipticpqrfac{b^2/a}{2j}{q}{p^2}q^{j}}
{\elliptictheta{b} \ellipticqrfac{q, bq^{k+1}, bq^{1-k}/a,  b^2/a}{j} \ellipticpqrfac{aq}{2j}{q}{p^2}}
=\frac{\ellipticptheta{a}{p^2}\ellipticqrfac{bq}{k}
\ellipticpqrfac{pq, a^2q/b^2}{k}{q}{p^2}}
{\ellipticptheta{aq^{2k}}{p^2} \ellipticqrfac{a/b}{k}
\ellipticpqrfac{a, b^2p/a}{k}{q}{p^2}}.
\end{equation}

We use the matrices $\F$ and $\G$ from Proposition~\ref{warnaar-matrix}; $H_{mj} = h_j(m) K(m)$, with
$h_{j}(m)= M_{mj}(c, c^2/aq,  q, p)$,  and
\begin{equation*}
K(m)=  \frac{\ellipticqrfac{aq}{m}\ellipticqrfac{aq/c}{m}\ellipticpqrfac{c^2/a }{2m}{q}{p^2}}{\ellipticqrfac{c^2/a}{m}\ellipticpqrfac{aq}{2m}{q}{p^2}}.
\end{equation*}
Next, we sum the inner sum on the right-hand side of Lemma~\ref{lemma:qLidea}, and use a suitable definition of $\alpha_j$
to obtain the expansion formula:
\begin{multline}\label{expansion5}
\frac{\ellipticqrfac{aq, aq/c}{n}\ellipticpqrfac{c^2/a}{2n}{q}{p^2} }{\ellipticqrfac{c/a, c^2/a}{n}\ellipticpqrfac{aq}{2n}{q}{p^2}} 
\sum_{j=0}^{n} \frac{ \ellipticqrfac{cq^{n}, q^{-n}}{j}}{\ellipticqrfac{ cq^{-n}/a, c^2q^{n}/a} {j}}  A_{j}\\
= \sum_{k=0}^{n} 
\frac{\ellipticptheta{apq^{2k}}{p^2} \ellipticqrfac{q^{-n}, cq^{n}}{k} \ellipticpqrfac{ap, a^2q/c^2 }{k}{q}{p^2}}
{\ellipticptheta{ap}{p^2} \ellipticqrfac{aq^{1+n}, aq^{1-n}/c}{k}  \ellipticpqrfac{q, c^2p/a }{k}{q}{p^2}}q^{k}\\
\times \sum_{j=0}^k \frac{\ellipticpqrfac{ apq^{k}, q^{-k} }{j}{q}{p^2} }
{\ellipticpqrfac{c^2q^{-k}/a^2, c^2pq^{k}/a  }{j}{q}{p^2}  } A_{j}.
\end{multline}
\begin{Remark}
We observe that the summand on the left-hand side of \eqref{expansion5} contains $M_{nj}(c, c^2/aq,  q, p)$, while the inner sum on the right-hand side contains
$M_{kj}(ap, c^2p/aq,  q, p^2)$. 
\end{Remark}

\begin{Remark} Note that for each of these expansion formulas, $h_j(m)$  is a specialization of \eqref{M-terms}. The same appears on the left-hand side of the expansion formula. In addition, the inner sum on the right-hand side of the formula also contains \eqref{M-terms}. This observation is relevant to interpreting \eqref{expansion2}, \eqref{expansion3} and \eqref{expansion5} as instances of WP Bailey lemma type results. The matrix $\M$ is essentially the elliptic WP Bailey matrix. The inner sum on the right-hand side gives the interpretation as the initial WP Bailey pair; and its form on the left-hand side gives the new WP Bailey pair.


\end{Remark}


We have obtained four expansion formulas in this section, by applying Lemma~\ref{lemma:qLidea} with Warnaar's matrix inversion, and using four summation theorems. In the next section, we systematically check whether they imply any new transformation formulas.

\section{Elliptic transformation formulas}\label{sec:transformations}
The objective of  this section is to obtain transformation formulas from the expansion formulas in \S\ref{sec:expansions}. 
We choose the sequence $A_j$ in the statement of the expansion formulas so that the inner sum on the right-hand side of the expansion formula can be summed using a summation theorem. In addition to the summation theorems \eqref{10V9}, \eqref{SOW1.3}, \eqref{SOW1.4}, \eqref{lrw-1}, 
we use the following two summations, both due to Warnaar. 
We use the summation \cite[Eq.~(1.10)]{SOW2005}:
\begin{multline}\label{SOW1.10}
\sum_{j=0}^k
\frac
{\ellipticptheta{abq^{2j}}{p^2} \ellipticpqrfac{ab, aq/b, a^2q^{k+1}, q^{-k}}{j}{q}{p^2} \ellipticpqrfac{b^2}{j}{q^2}{p^2}}
{\ellipticptheta{ab}{p^2}\ellipticpqrfac{q, b^2, bq^{-k}/a, abq^{k+1}}{j}{q}{p^2} \ellipticpqrfac{a^2q^2}{j}{q^2}{p^2}}
\Big(-\frac{b}a\Big)^{j}\\
= \chi(k \text{ is even})
\frac
{\ellipticpqrfac{abq}{k}{q}{p^2}\ellipticpqrfac{q, a^2q^{2}/b^2}{k/2}{q^2}{p^2} }
{\ellipticpqrfac{aq/b}{k}{q}{p^2}\ellipticpqrfac{a^2q^2, b^2q}{k/2}{q^2}{p^2} }
.
\end{multline}
In addition, we use  \cite[Eq.~(1.15)]{SOW2005}:
\begin{equation}\label{SOW1.15}
\sum_{j=0}^k
\frac
{\ellipticptheta{bq^{2j}}{p^2} \ellipticpqrfac{b^2}{j}{q^2}{p^2}\ellipticpqrfac{c/b, bq/c, q^{k+1}, q^{-k}}{j}{q}{p^2} (-b)^{j}}
{\ellipticptheta{b}{p^2}\ellipticpqrfac{q^2}{j}{q^2}{p^2} \ellipticpqrfac{ b^2q/c, c, bq^{k+1}, bq^{-k}}{j}{q}{p^2} }
= 
\frac
{\ellipticpqrfac{bq, c/b^2}{k}{q}{p^2}\ellipticpqrfac{cq^{-k}}{k}{q^2}{p^2} }
{\ellipticpqrfac{q/b, c}{k}{q}{p^2}\ellipticpqrfac{cq^{-k}/b^2}{k}{q^2}{p^2} }
\Big( \frac{-1}{b}\Big)^k.
\end{equation}
We have stated this summation theorem without using the $_{r+1}V_r$ notation. 
\subsection*{Transformation formulas obtained from the first expansion formula}
We can choose $A_j$ in \eqref{ell-2} so that the inner sum on the right-hand side is summable using one of the six summation formulas mentioned earlier. However, three of those lead to known transformations. In addition to Frenkel and Turaev's transformation for $_{12}V_{11}$ series given in \cite[Equation~(11.2.23)]{GR90}, we obtain \cite[Theorem 4.2]{VPS2002} (see also \cite[Theorem 4.1]{SOW2003}) and \cite[Theorem 4.2]{SOW2003}. 

We begin by choosing $A_j$, so that the inner sum on the right-hand side of \eqref{ell-2} is summable by
\eqref{SOW1.3}.
This yields the transformation formula
\begin{align}\label{tr-1}
\sum_{j=0}^n &
\frac{\ellipticptheta{a^2q^{4j}}{p^2}\ellipticqrfac{bq^n, q^{-n},  d, a^2q/bcd}{j} \ellipticpqrfac{a^2, c/q}{j}{q^2}{p^2} }
{\ellipticptheta{a^2}{p^2}\ellipticqrfac{aq^{1-n}/b, aq^{1+n}, aq/d, bcd/a }{j}  \ellipticpqrfac{q^2, a^2q^3/c}{j}{q^2}{p^2}}
q^{2j} \cr
&\hspace{1.5cm}=
\frac
{\ellipticqrfac{bc/a, aq/cd, aq,  bd/a }{n}}
{\ellipticqrfac{aq/c,  bcd/a, b/a,  aq/d}{n}} 
\sum_{k=0}^n \bigg(
\frac{\ellipticptheta{a^2q^{4k}/c^2}{p^2}\ellipticpqrfac{a^2/c^2, 1/cq}{k}{q^2}{p^2}}
{\ellipticptheta{a^2/c^2}{p^2}\ellipticpqrfac{q^2, a^2q^3/c}{k}{q^2}{p^2}
}\cr
&\hspace{3cm}\times \frac
{\ellipticqrfac{bq^n , q^{-n}, d, a^2q/bcd}{k}}
{\ellipticqrfac{aq^{1-n}/bc, aq^{1+n}/c, aq/cd, bd/a}{k}}
q^{2k}\bigg).
\end{align}

To obtain \eqref{tr-1}, first
substitute $(a,b)\mapsto (a/b, a)$ in \eqref{ell-2}. Now
 take
$$A_j = \frac{\elliptictheta{-aq^{2j}}\ellipticpqrfac{a^2, b/q}{j}{q^2}{p^2}
\ellipticqrfac{q,ad/bc, aq/d}{j}}
{\elliptictheta{-a} \ellipticpqrfac{q^2, a^2q^3/b}{j}{q^2}{p^2} 
\ellipticqrfac{d,bcq/d}{j}
}\Big(\frac{cq}{a}\Big)^j.$$
The inner sum can be summed using Warnaar's summation \eqref{SOW1.3}. We relabel parameters to obtain \eqref{tr-1}.

To obtain the next transformation formula, we 
take $a=q$ and $p\mapsto p^2$ in \eqref{ell-2}, and make an appropriate choice of $A_j$.
The inner sum can be summed using Warnaar's summation \eqref{SOW1.15}. 
After relabeling parameters, and changing the nome from $p^2$ to $p$, 
we obtain the following transformation formula:
\begin{align}\label{tr-4}
\sum_{j=0}^n &
\frac{\ellipticptheta{aq^{2j}}{p}
 \ellipticpqrfac{bq^{n}, q^{-n},  c, d , aq^2/bc, q/d }{j}{q}{p} 
 \ellipticpqrfac{a^2}{j}{q^2}{p} 
 }
{\ellipticptheta{a}{p} \ellipticpqrfac{aq^{1-n}/b, aq^{1+n}, aq/c, ad,  bc/q, aq/d }{j}{q}{p}
 \ellipticpqrfac{q^2}{j}{q^2}{p} 
}
(-a)^{j}
\cr
&\hspace{1.5 cm} =
\frac{\ellipticpqrfac{ aq, b/q,  q^2/c, bc/a}{n}{q}{p}}
{\ellipticpqrfac{q^2, b/a, bc/q, aq/c}{n}{q}{p}}
\sum_{k=0}^n \bigg(\frac{\ellipticptheta{q^{1+2k}}{p}\ellipticpqrfac{adq^{-k}}{k}{q^2}{p}
}
{\ellipticptheta{q}{p} \ellipticpqrfac{dq^{-k}/a}{k}{q^2}{p}
}
\cr
&\hspace{3 cm}\times \frac{\ellipticpqrfac{ bq^{n}, q^{-n}, c, aq^2/bc, d/a}{k}{q}{p}}
{\ellipticpqrfac{ q^{2-n}/b,  q^{2+n}, q^2/c, bc/a, ad }{k}{q}{p}}
\Big(-\frac{q}{a}\Big)^{k}\bigg).
\end{align}
\begin{Remark} If we take the products in front of the sum on the right-hand side of \eqref{tr-4} to the left, and take the limit as $b\to a$,
the sum on the left-hand side reduces to the term with $j=n$. After relabelling parameters by taking $a\mapsto b/a$, and $d\mapsto ab$, we obtain a summation due to Zhao and Deng~\cite[Theorem~3.2]{ZD2016}. These authors obtained their result using matrix inversion.
\end{Remark}

We specialize $A_j$ in \eqref{ell-2} suitably,
apply \eqref{lrw-1}, and obtain the transformation formula:
\begin{align}\label{tr-5} 
\sum_{j=0}^n &
\frac{\ellipticptheta{aq^{2j}}{p} \ellipticqrfac{a, cq^{n}, q^{-n},  bq/a, abq/cd,  d}{j}  \ellipticpqrfac{a^2/b}{2j}{q}{p^2}}
{\ellipticptheta{a}{p} \ellipticqrfac{q, aq^{1-n}/c, aq^{1+n},   a^2/b, cd/b, aq/d}{j} \ellipticpqrfac{bq}{2j}{q}{p^2}}
q^{j}
\cr
&\hspace{1.5cm}=\frac{\ellipticqrfac{c/b, aq,  cd/a, bq/d}{n}}
{\ellipticqrfac{bq, c/a, aq/d, cd/b}{n}}
\sum_{k=0}^n \bigg(\frac{\ellipticptheta{bpq^{2k}}{p^2} 
\ellipticpqrfac{bp, b^2q/a^2}{k}{q}{p^2}}
{
\ellipticptheta{bp}{p^2} 
\ellipticpqrfac{q, a^2p/b}{k}{q}{p^2}
} 
\cr
&\hspace{3cm}\times
\frac
{\ellipticqrfac{cq^{n}, q^{-n},  abq/cd, d}{k}}
{\ellipticqrfac{bq^{1-n}/c, bq^{1+n}, cd/a, bq/d }{k}}
q^{k}\bigg).
\end{align}

\subsection*{Transformation formulas obtained from the second expansion formula}

If we use \eqref{10V9} to sum  the inner sum
in \eqref{expansion2}, we obtain a transformation formula due to Warnaar~\cite{SOW2003} (the second formula below Theorem 5.2).

Next, in \eqref{expansion2}, we apply \eqref{SOW1.3} with the following simultaneous substitutions:
$$(q,p, a, b)\mapsto (q^2, p^2, acq, cq/a),$$
a suitable choice of $A_j$, replace $c$ by $b$, and obtain the transformation formula:
\begin{align}\label{exp2-tr2}
\sum_{j=0}^{n} &
\frac{  \ellipticptheta{a^2b^2q^{8j+2}}{p^4} \ellipticpqrfac{b^2q^{2n}, q^{-2n}}{j}{q^2}{p^2} 
\ellipticpqrfac{a^2b^2q^2, b/aq}{j}{q^4}{p^4} 
\ellipticqrfac{-aq}{2j}}
{ \ellipticptheta{a^2b^2q^2}{p^4}  
\ellipticpqrfac{aq^{3-2n}/b,  abq^{3+2n}}{j}{q^2}{p^2} 
\ellipticpqrfac{q^4, a^3bq^{7}}{j}{q^4}{p^4} 
\ellipticqrfac{-bq}{2j}} \Big( \frac{aq^4}{b}\Big)^j \cr
&\hspace{1.5cm} = 
\frac{\elliptictheta{-b }\ellipticqrfac{-q, b/a}{n}\ellipticpqrfac{abq^3}{n}{q^2}{p^2}}
{ \elliptictheta{ - bq^{2n}} \ellipticqrfac{ -b, aq}{n}  \ellipticpqrfac{b/aq}{n}{q^2}{p^2} }
q^{-n}
\cr
&\hspace{3cm} \times
\sum_{k=0}^{n} \frac{\ellipticptheta{a^4q^{8k}}{p^4}
 \ellipticqrfac{bq^{n}, q^{-n}}{k}  \ellipticpqrfac{a^4, a/bq^{3}}{k}{q^4}{p^4} 
 } 
{ \ellipticptheta{a^4}{p^4}  \ellipticqrfac{ aq^{1-n}/b, aq^{1+n}}{k} \ellipticpqrfac{q^4, a^3bq^{7}}{k}{q^4}{p^4}
}
q^{4k}.
\end{align}

The following transformation is obtained by applying \eqref{SOW1.4} in \eqref{expansion2}:
\begin{align}\label{exp2-tr3}
\sum_{j=0}^{n} &
\frac{  \ellipticptheta{abq^{4j+1}}{p^2} \ellipticpqrfac{  b^2q^{2n}, q^{-2n}, aq/b, abq}{j}{q^2}{p^2} 
\ellipticqrfac{-aq, bq}{2j}}
{ \ellipticptheta{abq}{p^2}  
\ellipticpqrfac{ aq^{3-2n}/b,   abq^{3+2n},   b^2q^2, q^2 }{j}{q^2}{p^2} 
\ellipticqrfac{-bq, aq}{2j}} q^{2j} \cr
&\hspace{1.5cm}= \frac 
{\elliptictheta{-b }\ellipticqrfac{-q, b/a}{n}\ellipticpqrfac{abq^3}{n}{q^2}{p^2}}
{ \elliptictheta{ - bq^{2n}} \ellipticqrfac{-b, aq }{n}  \ellipticpqrfac{b/aq}{n}{q^2}{p^2} }
 q^{-n}\cr
&\hspace{3cm}
\times \sum_{k=0}^{n} \frac{\elliptictheta{-aq^{2k}}
 \ellipticqrfac{-a,  bq^{n}, q^{-n},  a/b}{k}} 
{ \elliptictheta{-a}  \ellipticqrfac{q, aq^{1-n}/b, aq^{1+n},  -bq}{k} 
}q^{k}.
\end{align}
Here we replaced $c$ by $b$ at the very end.

Next, we use \eqref{SOW1.10} with $q\mapsto q^2$, $a\mapsto a/q$ and $b\mapsto cq^2$
and a suitable choice of $A_j$ in \eqref{expansion2}, and relabel parameters, to obtain the transformation formula:
\begin{align}\label{exp2-tr4}
\sum_{j=0}^{n} &
\frac{  
\ellipticptheta{abq^{4j+1}}{p^2} 
\ellipticpqrfac{b^2q^{2n}, q^{-2n},  a/bq,  abq  }{j}{q^2}{p^2} 
 \ellipticqrfac{-aq}{2j}
\ellipticpqrfac{b^2q^4}{j}{q^4}{p^2}
}
{
\ellipticptheta{abq}{p^2}  
\ellipticpqrfac{ aq^{3-2n}/b,  abq^{3+2n}, b^2q^4, q^2 }{j}{q^2}{p^2} 
\ellipticqrfac{-bq}{2j}
 \ellipticpqrfac{a^2q^2}{j}{q^4}{p^2}
} 
 (-1)^j q^{3j} \cr
&\hspace{1.5cm}=
\frac {\elliptictheta{-b } \ellipticqrfac{-q, b/a}{n}\ellipticpqrfac{abq^3}{n}{q^2}{p^2}}
{ \elliptictheta{-bq^{2n} } \ellipticqrfac{-b, aq}{n}  \ellipticpqrfac{b/aq}{n}{q^2}{p^2} }
q^{-n}\cr
&\hspace{3cm} \times 
 \sum_{k=0}^{n} \frac{
\ellipticptheta{a^2q^{8k}}{p^2}
 \ellipticqrfac{bq^{n}, q^{-n}}{2k} 
 \ellipticpqrfac{a^2, a^2/b^2q^2}{k}{q^4}{p^2}
 } 
 {
\ellipticptheta{a^2}{p^2}
 \ellipticqrfac{ aq^{1-n}/b, aq^{1+n}}{2k} 
 \ellipticpqrfac{q^4, b^2q^6}{k}{q^4}{p^2}
 } 
q^{4k}.
\end{align}

Next, we use $q\mapsto q^2$ in \eqref{SOW1.15}, use the $a=q$, $c\mapsto b/q^2$ case of \eqref{expansion2}, and relabel parameters,
 to obtain:
\begin{align}\label{exp2-tr5}
\sum_{j=0}^{n}  &
\frac{  
\ellipticptheta{aq^{4j}}{p^2} 
\ellipticpqrfac{a^2q^{2n-4}, q^{-2n},  aq/b, bq/a}{j}{q^2}{p^2} 
 \ellipticpqrfac{-q^2}{2j}{q}{p}
\ellipticpqrfac{a^2}{j}{q^4}{p^2}
}
{\ellipticptheta{a}{p^2}  
\ellipticpqrfac{   q^{6-2n}/a, aq^{2n+2}, bq, a^2q/b }{j}{q^2}{p^2} 
\ellipticpqrfac{-a/q}{2j}{q}{p}
\ellipticpqrfac{q^4}{j}{q^4}{p^2}
} 
(-1)^jq^{3j} \cr
&\hspace{1.5cm}= \frac
{\elliptictheta{ -a/q^2}\ellipticqrfac{-q, a/q^3}{n}\ellipticpqrfac{aq^2}{n}{q^2}{p^2}}
{\elliptictheta{ -aq^{2n-2}} \ellipticqrfac{-a/q^2, q^2}{n}  \ellipticpqrfac{a/q^4}{n}{q^2}{p^2} }
q^{-n}\cr
&\hspace{3cm} \times 
\sum_{k=0}^{n} \frac{
\ellipticptheta{q^{4k+2}}{p^2}
 \ellipticqrfac{aq^{n-2}, q^{-n} }{k} 
 \ellipticpqrfac{ bq/a^2}{k}{q^2}{p^2}
 \ellipticpqrfac{ bq^{1-2k}}{k}{q^4}{p^2}
 } 
 {
\ellipticptheta{q^2}{p^2}
 \ellipticqrfac{q^{4-n}/a, q^{n+2}}{k} 
 \ellipticpqrfac{bq}{k}{q^2}{p^2}
 \ellipticpqrfac{bq^{1-2k}/a^2}{k}{q^4}{p^2}
 } 
\Big(\frac{-q^2}{a}\Big)^{k}.\cr
\end{align}

For the final formula from the second expansion formula, we use $q\mapsto q^2$, $p\mapsto p^2$, and $a\mapsto a^2$ in \eqref{lrw-1}, and use the  $c\mapsto b/aq$ case of \eqref{expansion2} with a suitable choice of $A_j$, interchange $a$ and $b$,  to obtain:
\begin{align}\label{exp2-tr6}
 \sum_{j=0}^{n} &
\frac{  \ellipticptheta{aq^{4j}}{p^2} 
\ellipticpqrfac{a, a^2q^{2n-2}/b^2, q^{-2n},  b^2q^2/a }{j}{q^2}{p^2} 
\ellipticpqrfac{a^2/b^2}{2j}{q^2}{p^4} 
 \ellipticqrfac{-bq}{2j}
}
{
\ellipticptheta{a}{p^2}
\ellipticpqrfac{q^2, b^2q^{4-2n}/a,   aq^{2n+2},   a^2/b^2}{j}{q^2}{p^2} 
\ellipticpqrfac{b^2q^2}{2j}{q^2}{p^4} 
 \ellipticqrfac{-a/b}{2j}
} 
\Big( \frac{b^2q^3}{a}\Big)^j
 \cr
&\hspace{1.5cm}= 
\frac
{\elliptictheta{-a/bq}
\ellipticqrfac{-q, a/b^2q}{n}\ellipticpqrfac{aq^2}{n}{q^2}{p^2}}
{\elliptictheta{ -aq^{2n-1}/b}
\ellipticqrfac{-a/bq, bq}{n}  \ellipticpqrfac{a/b^2q^2}{n}{q^2}{p^2} 
}
q^{-n}\cr
&\hspace{3cm}\times
\sum_{k=0}^{n} \frac{
\ellipticptheta{b^2q^{4k}p^2}{p^4}
 \ellipticqrfac{aq^{n-1}/b, q^{-n} }{k} 
 \ellipticpqrfac{b^2p^2,b^4q^2/a^2}{k}{q^2}{p^4}
 } 
 {
\ellipticptheta{b^2p^2}{p^4}
 \ellipticqrfac{ b^2q^{2-n}/a, bq^{n+1}}{k} 
 \ellipticpqrfac{q^2, a^2p^2/b^2}{k}{q^2}{p^4}
 } 
q^{2k}.
\end{align}

\begin{Remarks} \

\begin{enumerate}
\item There are two sets of six transformation formulas that appear by considerations as above.
\begin{enumerate}
\item The first set of transformation formulas is obtained from 
 the first expansion formula. These are: Frenkel and Turaev's transformation for $_{12}V_{11}$  (\cite[Equation~(11.2.23)]{GR90}), Spiridonov's \cite[Theorem 4.2]{VPS2002} (that is, \cite[Theorem 4.1]{SOW2003}), Warnaar's transformation \cite[Theorem 4.2]{SOW2003}, \eqref{tr-1}, \eqref{tr-4} and \eqref{tr-5}.  In terms of the WP Bailey lemma,  they are consequences of Spiridonov~\cite[Theorem~4.3]{VPS2002}. 

 \item The second set is obtained from the second expansion formula. These include Warnaar's transformation given in \cite{SOW2003} (the second formula below Theorem 5.2), and 
\eqref{exp2-tr2}, \eqref{exp2-tr3}, \eqref{exp2-tr4}, \eqref{exp2-tr5}, and, \eqref{exp2-tr6}. Equivalently, these can be obtained from Warnaar~\cite[Theorem 5.2]{SOW2003}, 
a WP Bailey lemma type result. 
\end{enumerate}
\item If we use any of the aforementioned summation theorems to sum the inner sum of the third and fourth expansion formulas (\eqref{expansion3} and  \eqref{expansion5}, respectively), we obtain special cases of the first set of transformation formulas.
\item As we have seen, \eqref{tr-4} contains a summation theorem from \cite{ZD2016}. 
We have not listed the summation theorems implied by these transformation formulas. 
\item The transformation formulas can be plugged back into the expansion formulas of \S\ref{sec:expansions} to obtain more formulas; this corresponds to following steps along the branches of the WP Bailey tree. 

 \end{enumerate}
 \end{Remarks}

The calculations in \S\ref{sec:expansions} illustrate how, beginning with a summation theorem, we can try to derive an expansion formula by using Lemma~\ref{lemma:qLidea}. If the entries of $\M$ are present in the summation, then we can use the matrix inversion of Proposition~\ref{warnaar-matrix}. The resulting expansion formula is a WP Bailey lemma type of result. Subsequently, the expansion formula can be combined with (possibly other) summations to obtain transformation formulas.
This has resulted in eight new transformation formulas in this section, and thus appears to be quite productive.



\section{Remarks on $q$-hypergeometric special cases}\label{sec:special}
We can take the nome $p=0$ in our formulas to obtain results for basic hypergeometric series. 
Taking $p=0$ is immediate; the $q, p$-shifted factorials reduce to $q$-rising factorials. 
Note that several results obtained in this fashion are for bibasic series. Some of Warnaar's summations 
that we have used here  extend results of Nassrallah and Rahman (see \cite[\S 3.10]{GR90}). But we have been unable to find the $p=0$ cases of the transformation formulas in \S\ref{sec:transformations} in the literature; they appear to be new.

As one example, note that the $p=0$ case of \eqref{tr-5} can be written as
\begin{multline}\label{special-tr-5} 
\sum_{j=0}^n 
\frac{\big(1-{aq^{2j}}\big) \qrfac{a, q^{-n}, aq/b, c, d,   \sqrt{b},- \sqrt{b}, \sqrt{bq}, -\sqrt{bq}, a^3q^{1+n}/bcd  }{j} }
{(1-a)\qrfac{q, aq^{1+n}, b,  aq/c, aq/d, aq/\sqrt{b},-aq/\sqrt{b},a\sqrt{q/b},- a\sqrt{q/b}, bcdq^{-n}/a^2  }{j} }
q^{j}
\cr
=\frac{\qrfac{aq, a^2q/bc,a^2q/ bd, aq/cd}{n}}
{\qrfac{a^2q/b, aq/c,   aq/d, a^2q/bcd }{n}}
\sum_{k=0}^n 
\frac
{\qrfac{q^{-n}, c,  d , a^3q^{1+n}/bcd,  a^2q/b^2}{k}}
{\qrfac{a^2q^{1+n}/b, a^2q/bc, a^2q/bd,  cdq^{-n}/a, q}{k}}
q^{k}.
\end{multline}

 The sum on the left-hand side is a very-well-poised, and balanced, $_{12}\phi_{11}$; on the right-hand side the sum is a $_5\phi_4$, which is nearly-poised  of the second kind (see \cite{GR90}, pages 4 and 39, for this terminology). This transformation formula is a companion to  a formula of Bailey, given in \cite[Equation (2.8.3)]{GR90}, as well as  two of Andrews and Berkovich~\cite[Equations (3.7) and (3.8)]{AB2002}. 
 
 When the parameters $c$ and $d$ are taken to be $\pm aq/\sqrt{b}$, the $_{12}\phi_{11}$ reduces to an $_8\phi_7$, and can be summed using Jackson's sum \cite[Equation~(2.6.2)]{GR90}, to yield a summation 
 theorem: 
\begin{equation}\label{special-tr-5a} 
\sum_{k=0}^n 
\frac
{\qrfac{q^{-n},   aq^{n-1}, b^2q}{k} \pqrfac{abq^2}{k}{q^2}}
{\qrfac{abq^{1+n},   bq^{2-n}, q}{k}\pqrfac{ab}{k}{q^2}}
q^{k}
=\frac{\qrfac{ abq, b}{n} \pqrfac{ a/bq, a/b}{n}{q^2}}
{\qrfac{a/b, 1/bq}{n} \pqrfac{ab, abq}{n}{q^2}}.
\end{equation}
Similarly, we take $c=-aq/\sqrt{b}$ and $d=a\sqrt{q/b}$ in \eqref{special-tr-5}, use Jackson's sum~\cite[Equation~(2.6.2)]{GR90}, 
and replace $q$ by $q^2$, and $b$ by $b^2$, to obtain
\begin{multline}\label{special-tr-5-b} 
\sum_{k=0}^n 
\frac
{\pqrfac{q^{-2n}, -aq^2/b, a^2q^2/b^4,  -aq^{2n-1}, aq/b}{k}{q^2} }
{\pqrfac{a^2q^{2n+2}/b^2,   -aq^{3-2n}/b^2, -a/b, aq/b, q^2}{k}{q^2}}
q^{2k} 
=\frac{\pqrfac{  -aq/b^2, a^2q^2/b^2}{n}{q^2}\qrfac{b, -b/q}{2n}}
{\pqrfac{b^2,  -b^2/aq}{n}{q^2}\qrfac{aq/b, -a/b}{2n}}.
\end{multline}
Again, we have been unable to find these in the literature; they are similar to analogous special cases of Bailey's transformation mentioned above.

These identities suggest that our technique should be useful in the study of basic hypergeometric series. 

In conclusion, we can say that given a summation theorem, we can attempt to combine it with a known matrix inversion and derive an expansion formula by using Lemma~\ref{lemma:qLidea}. The expansion formula thus obtained  is a Bailey lemma type result. It can be combined with (possibly other) summations to obtain transformation formulas. 

\subsection*{Acknowledgements} We thank Michael Schlosser for helpful remarks, Monu Jangra for assistance in checking results on Sagemath, and the anonymous referees for several astute observations and helpful suggestions.



\end{document}